\newcommand{\R}{{\mathbb R}}
\newcommand{\C}{\mathbb{C}}
\newcommand{\U}{\mathbb{U}}
\newcommand{\tr}{\operatorname{tr}}
\newtheorem{lemma}{Lemma}
\newtheorem{theor}{Theorem}
\newtheorem{corol}{Corollary}
\begin{document}

\title{Univariate interpolation by exponential functions and Gaussian RBFs for generic sets of nodes}
\author{D. Yarotsky\footnote{Institute for Information Transmission Problems, Moscow,
\href{mailto:yarotsky@datadvance.net}{\nolinkurl{yarotsky@datadvance.net}}
}
}
\date{October 22, 2012}
\maketitle
\begin{abstract}
We consider interpolation of univariate functions on arbitrary sets of nodes  by Gaussian radial basis functions or by exponential functions. 
We derive closed-form expressions for the interpolation error based on the Harish-Chandra-Itzykson-Zuber formula. 
We then prove the exponential convergence of interpolation for functions analytic in a sufficiently large domain. 
As an application, we prove the global exponential convergence of optimization by expected improvement for such functions.
\end{abstract}

\tableofcontents

\section{Introduction}
In this paper we consider univariate interpolation by Gaussian radial basis functions (RBF) and the closely related interpolation by exponential functions.

RBF interpolation is widely used in applications due to its simplicity and ability to handle generic scattered multidimensional data \cite{buhmann2003radial,
Wendland_2005}.
Our interest in RBF interpolation  is motivated by its role in the optimization by expected improvement, as this interpolation determines the mean of a stationary isotropic Gaussian field after conditioning on a finite number of measurements \cite{Jones:1998:EGO}. 
We restrict our attention to the Gaussian (squared-exponential) RBF, which is one of the most popular examples of analytic RBFs.  
In \cite{yarotsky_ei} we proved that optimization by expected improvement with the corresponding correlation function may be inconsistent for infinitely smooth functions. 
One of the goals of the present paper is to rule out this inconsistency for functions analytic in a sufficiently large domain.
The main ingredient in the proof is a convergence result for RBF interpolation. 

Convergence of RBF interpolation  has been studied extensively in recent years \cite{buhmann2003radial,
Wendland_2005}. General convergence results are most naturally stated for interpolated functions from ``native'' spaces associated with the considered RBF. For a class of multivariate RBFs including the Gaussian, a strong  theorem of this type has been proved by Madych and Nelson \cite{Madych:1992}. This theorem, however, is not sufficient for our purposes, for two reasons. Firstly, the native space for the Gaussian RBF is very narrow (in particular, all functions from this space are entire and vanish at infinity for real values of the arguments). Secondly, and more importantly, this theorem establishes convergence under assumption of an asymptotically dense filling of the design space by the sequence of interpolation nodes. While it is a standard assumption for interpolation when considered as a stand-alone procedure, it may not hold in the context of optimization, where the nodes are determined by the optimization algorithm rather than freely prescribed in advance. For analytic RBFs and interpolated functions, however, one can expect the dense filling to be an excessive requirement due to the non-locality of analytic dependencies.
 
We therefore turn our attention to the univariate case, which is significantly simpler than the multivariate case, and where one can hope to obtain much more complete results, especially for the Gaussian covariance function. Indeed, it is known that in the limit of increasingly flat  rescaled Gaussian RBFs the univariate RBF interpolation is equivalent to the polynomial interpolation \cite{Driscoll2002a}.
In \cite{Platte:2005:PPT:1072891.1084593} Platte and Driscoll have established, by a change of variables, a relation between polynomial interpolation and interpolation by Gaussian RBFs for sets of equally spaced nodes.

There also exists a very simple relation between interpolation by Gaussian RBFs and interpolation by linear combinations of simple exponential functions (sometimes called \emph{exponential ridge functions} in the multivariate setting).  
In the context of multivariate interpolation this relation appeared, in particular, in the work of Schaback \cite{schaback} connecting interpolation by Gaussian RBF to the ``least'' polynomial interpolation of de Boor and Ron. See also the work of Zwicknagl \cite{zwicknagl}, where interpolation by exponential functions is considered as an example of a general class of interpolations based on power series kernels.   

In this paper we establish a further connection between univariate interpolation by polynomials, Gaussian RBFs and exponential functions, by deriving in the latter two cases general error formulas analogous to the well-known error formula for the polynomial interpolation. These formulas are based on the Harish-Chandra-Itzykson-Zuber integral \cite{harish,iz,gr}, which was earlier used by Bos and De Marchi \cite{demarchi} to determine the distribution of nodes maximizing the determinant of the Gaussian RBF interpolation matrix. Our error formulas are valid for arbitrary 1D sets of nodes. They are derived in Section \ref{sec:2}.

In Section \ref{sec:3} we use these formulas to prove convergence of interpolations of analytic functions by exponentials or by Gaussian RBFs for generic infinite sequences of nodes. In particular, this result does not require the nodes to densely fill the design space. 

Finally, in Section \ref{sec:4} we prove the global exponential convergence of optimization by expected improvement for analytic functions as a straightforward application of the interpolation convergence theorem.

\section{Closed-form interpolation error formulas 
} \label{sec:2}

We consider linear interpolation of univariate functions by linear combinations of given basis functions $f_1,f_2,\ldots$. Given a set of distinct nodes $x_1,\ldots,x_n\in\R$ and a function $f$, 
we define the interpolant  $If$ by  
\begin{equation*}
I f=\sum_{k=1}^nc_k f_k,
\end{equation*}
where the coefficients $c_k$ are chosen so that  
\begin{equation*}
I f(x_l)=f(x_l),\quad l=1,\ldots,n.
\end{equation*}
We will occasionally write the operator $I$ as $I_{\{x_k\}_{k=1}^n}$ or $I_n$ to emphasize the dependence on the nodes or their number. 

A particular type of interpolation is specified by the choice of basis functions $f_k$. We will consider the following types:
\begin{itemize}
\item \emph{Interpolation by Gaussian RBF} (denoted $I^{\rm g}$ or $I^{\rm g}_{\{x_k\}_{k=1}^n}$)
corresponds to Gaussians centered at the interpolation nodes $x_k$: 
\begin{equation*}
f_k(x) = e^{-(x-x_k)^2/2}.
\end{equation*}
\item For any distinct values $t_1,\ldots, t_n\in \R$,  \emph{interpolation by exponential functions} (denoted $I^{\rm e}$ or $I^{\rm e}_{\{x_k, t_k\}_{k=1}^n}$)  corresponds to  
\begin{equation*}
f_k(x) = e^{t_k x}.
\end{equation*}
\item \emph{Polynomial interpolation} ($I^{\rm p}$ or $I^{\rm p}_{\{x_k\}_{k=1}^n}$) corresponds to \begin{equation*}
f_k(x)=x^{k-1}.
\end{equation*}
\end{itemize}

For the interpolation $I$ to be well-defined, the matrix $(f_k(x_l))_{k,l=1}^n$ must be nondegenerate. This is so for the above three types:  for $I^{\rm g}$ this follows e.g. from the positive definiteness of the function $e^{-x^2/2}$; for $I^{\rm p}$ this follows from the nondegeneracy of the Vandermonde matrix; for $I^{\rm e}$ this follows e.g. from the arguments below. 

Gaussian interpolation $I_{\{x_k\}_{k=1}^n}^{\rm g}$ reduces to exponential interpolation $I_{\{x_k, x_k\}_{k=1}^n}^{\rm e}$ (i.e., with $t_k\equiv x_k$) by noting that 
\begin{equation*}
e^{-(x-x_k)^2/2} =e^{-x^2/2} e^{x_k x} e^{-x_k^2/2}.
\end{equation*}
Indeed, thanks to this identity we can write 
\begin{equation*}
I_{\{x_k\}_{k=1}^n}^\mathrm{g} f(x) = \sum_{k=1}^n c_k e^{-(x-x_k)^2/2} 
= e^{-x^2/2}\sum_{k=1}^n (c_k e^{-x_k^2/2}) e^{x_k x} 
= e^{-x^2/2} I_{\{x_k, x_k\}_{k=1}^n}^\mathrm{e} \widetilde{f}(x),
\end{equation*}
where  $\widetilde{f}(x)=e^{x^2/2}f(x)$. In other words, the interpolation operators are related by the identity
\begin{equation}\label{eq:g2e}
I_{\{x_k\}_{k=1}^n}^\mathrm{g} = e^{-\hat{x}^2/2} \circ I_{\{x_k, x_k\}_{k=1}^n}^\mathrm{e} \circ e^{\hat{x}^2/2},
\end{equation}
where $e^{\pm \hat{x}^2/2}$ is the operator of multiplication by the function $e^{\pm {x}^2/2}$.

This argument shows in particular that the interpolation $I_{\{x_k, t_k\}_{k=1}^n}^\mathrm{e}$ is well-defined, i.e. its interpolation matrix 
\begin{equation*}
A = (e^{t_kx_m})_{k,m=1}^n
\end{equation*} 
is invertible, at least if $t_k\equiv x_k$. 
In fact,  this interpolation is well-defined for any sets of distinct values $t_1,\ldots,t_n$ and distinct nodes $x_1,\ldots,x_n$. 
One way to see this is to use the remarkable formula of Harish-Chandra-Itzykson-Zuber (HCIZ).
To introduce this formula, we need a few definitions.
Consider the diagonal matrices    
\begin{equation*}
X=\operatorname{diag}(x_1,\ldots,x_n), \quad T=\operatorname{diag}(t_1,\ldots,t_n).
\end{equation*}
Let $V(X)$ denote the Vandermonde determinant for the points $x_1,\ldots,x_n$:
\begin{equation*}
V(X)=\det(x_k^m)_{\substack{1\le k\le n\\ 0\le m\le n-1}}=\prod_{1\le k< l\le n}(x_l-x_k).
\end{equation*}
Finally, define the constant $\beta_n$ by 
\begin{equation*}
\beta_n=\prod_{k=0}^{n-1}k!.
\end{equation*}
Then the HCIZ formula reads \cite{harish,iz,gr}:
\begin{equation}\label{eq:hciz}
\det A= \beta_n^{-1}V(X)V(T)\int_{\U(n)}e^{\tr(TU^{\dagger}XU)}dU,
\end{equation}
where integration is over Haar measure on the group $\U(n)$ of unitary matrices of size $n$. 
Here and in the sequel by $^\dagger$ we denote the Hermitian conjugate. 

Note that the integrand in \eqref{eq:hciz} is strictly positive. 
Since $V(X)\ne 0$ and $V(T)\ne 0$ for distinct $t_1,\ldots,t_n$ and  $x_1,\ldots,x_n$, it follows in particular that $\det A\ne 0$, as stated above.

The main results of this section are HCIZ-integral-based error formulas for the interpolations $I^\mathrm{g}$ and $I^\mathrm{e}$. 

We first consider the $I^\mathrm{e}$ case. We introduce some additional notation:
\begin{itemize}
\item Let $Z_{\{x_k, t_k\}_{k=1}^n}=\int_{\U(n)}e^{\tr(TU^{\dagger}XU)}dU>0$ be the integral appearing in the HCIZ formula \eqref{eq:hciz}.
\item Let $ \int_{S^{2n+1}} \cdot \,d\mathbf{v}$ denote integration over the normalized Lebesgue measure on the unit sphere $S^{2n+1}=\{\mathbf{v}: |\mathbf{v}|=1\}$ in $\C^{n+1}$.
\item Let $\widetilde{X}$ be the extension of the diagonal matrix $X$ by the value $x$:
\begin{equation*}
\widetilde{X}=\operatorname{diag}(x, x_1,\ldots,x_n).
\end{equation*}
\item Let $\operatorname{conv}(\widetilde{X})\subset\R$ be the convex hull of points $x,x_1,\ldots,x_n$.
\item Let $P_\mathbf{v}:\C^n\to \C^{n+1}$ be any isometry between $\C^n$ and the orthogonal complement to the vector $\mathbf{v}\in S^{2n+1}$ in $\C^{n+1}$; $P_\mathbf{v}$ is assumed to depend measurably on $\mathbf{v}$. 
\end{itemize}
\begin{theor}\label{thm:1} For any $f\in C^n(\operatorname{conv}(\widetilde{X}))$,
\begin{align}\label{eq:ieerror}
f(x)-I_{\{x_k, t_k\}_{k=1}^n}^\mathrm{e} f(x) = & \frac{ \prod_{k=1}^n (x-x_k) }{ n! Z_{\{x_k, t_k\}_{k=1}^n}}
\nonumber\\
&\times \int_{S^{2n+1}} \int_{\U(n)}
e^{\tr(T U^{\dagger} P_\mathbf{v}^{\dagger} \widetilde{X} P_\mathbf{v} U)} 
\Big[\prod_{k=1}^n \big(\frac{d}{dq}-t_k\big)\Big] 
f(q)\Big|_{q = \mathbf{v}^{\dagger} \widetilde{X}\mathbf{v}} d\mathbf{v}dU.
\end{align}
\end{theor}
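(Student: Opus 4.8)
The plan is to reduce the error to a ratio of two HCIZ integrals, one of size $n+1$ and one of size $n$, and to recognize the sphere-times-unitary-group integral in \eqref{eq:ieerror} as a single Haar integral over $\U(n+1)$.

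First I would record the classical determinant (Cramer) form of the error. For the basis $\{e^{t_kx}\}$,
\[
f(x)-I^{\mathrm e}f(x)=\frac{1}{\det A}\,\det
\begin{pmatrix}
f(x) & e^{t_1x} & \cdots & e^{t_nx}\\
f(x_1) & e^{t_1x_1} & \cdots & e^{t_nx_1}\\
\vdots & \vdots & & \vdots\\
f(x_n) & e^{t_1x_n} & \cdots & e^{t_nx_n}
\end{pmatrix}.
\]
Indeed, the displayed right-hand side $g(x)$ vanishes at each node (two equal rows), while $f-g$ lies in $\operatorname{span}\{e^{t_k\cdot}\}$ (expand the determinant along the top row); hence $f-g$ is the interpolant and $g=f-I^{\mathrm e}f$. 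Combined with $\det A=\beta_n^{-1}V(X)V(T)Z_{\{x_k,t_k\}}$ from \eqref{eq:hciz}, this is the object I want to match against \eqref{eq:ieerror}.

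Second I would prove \eqref{eq:ieerror} for pure exponentials $f(x)=e^{sx}$, $s\in\C$, and later extend by linearity and density. For $f=e^{s\cdot}$ the numerator above is exactly the $(n+1)\times(n+1)$ exponential determinant $\det(e^{\widetilde t_k\widetilde x_m})_{k,m=0}^{n}$ with $\widetilde T=\operatorname{diag}(s,t_1,\ldots,t_n)$, so HCIZ of size $n+1$ applies to it; and on the right of \eqref{eq:ieerror} one has $\big[\prod_k(\tfrac{d}{dq}-t_k)\big]e^{sq}=\prod_k(s-t_k)\,e^{sq}$. The decisive algebraic step is to glue the two exponential factors: writing $W=[\,\mathbf v\mid P_\mathbf v U\,]\in\U(n+1)$ (first column $\mathbf v$, remaining columns the orthonormal frame $P_\mathbf v U$ of $\mathbf v^\perp$), the matrix $W^\dagger\widetilde X W$ has $(0,0)$-entry $\mathbf v^\dagger\widetilde X\mathbf v$ and lower-right block $U^\dagger P_\mathbf v^\dagger\widetilde X P_\mathbf v U$, so that
\[
\tr(\widetilde T\,W^\dagger\widetilde X W)=s\,(\mathbf v^\dagger\widetilde X\mathbf v)+\tr\!\big(T\,U^\dagger P_\mathbf v^\dagger\widetilde X P_\mathbf v U\big),
\]
and therefore $e^{s\,\mathbf v^\dagger\widetilde X\mathbf v}\,e^{\tr(T U^\dagger P_\mathbf v^\dagger\widetilde X P_\mathbf v U)}=e^{\tr(\widetilde T W^\dagger\widetilde X W)}$.

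The key technical point, which I expect to be the main obstacle, is the measure identity
\[
\int_{S^{2n+1}}\int_{\U(n)}F\big([\,\mathbf v\mid P_\mathbf v U\,]\big)\,dU\,d\mathbf v=\int_{\U(n+1)}F(W)\,dW
\]
for the normalized measures: this is the disintegration of normalized Haar measure on $\U(n+1)$ along the fibration $\U(n)\hookrightarrow\U(n+1)\to S^{2n+1}$, $W\mapsto$ (first column of $W$); the first column of a Haar-random $W$ is uniform on the sphere, and conditionally the orthonormal frame of its complement is Haar-distributed in $\U(n)$ after trivializing by the measurable isometry $P_\mathbf v$. With this identity the double integral in \eqref{eq:ieerror} becomes $\int_{\U(n+1)}e^{\tr(\widetilde T W^\dagger\widetilde X W)}dW$, which HCIZ of size $n+1$ evaluates as $\beta_{n+1}\det(e^{\widetilde t_k\widetilde x_m})_{k,m=0}^{n}/\big(V(\widetilde X)V(\widetilde T)\big)$. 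Substituting $Z_{\{x_k,t_k\}}=\beta_n\det A/(V(X)V(T))$ and the splittings $V(\widetilde X)=\prod_{k=1}^n(x_k-x)\,V(X)$, $V(\widetilde T)=\prod_{k=1}^n(t_k-s)\,V(T)$, the $\beta$'s cancel via $\beta_{n+1}/\beta_n=n!$, the two products $\prod_k(x-x_k)$ and $\prod_k(s-t_k)$ cancel against the Vandermonde ratios (the two factors $(-1)^n$ multiplying to $1$), and what remains is exactly $\det(e^{\widetilde t_k\widetilde x_m})_{k,m=0}^{n}/\det A=f(x)-I^{\mathrm e}f(x)$, proving \eqref{eq:ieerror} for $f=e^{s\cdot}$.

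Finally I would extend to general $f\in C^n(\operatorname{conv}(\widetilde X))$. Both sides of \eqref{eq:ieerror} are linear in $f$: the left side is continuous already on $C^0$ (it uses only $f(x),f(x_1),\ldots,f(x_n)$), and the right side is continuous on $C^n$, since $\prod_k(\tfrac{d}{dq}-t_k)$ has order $n$, is evaluated at points $\mathbf v^\dagger\widetilde X\mathbf v$ of the compact set $\operatorname{conv}(\widetilde X)$, and is integrated against finite measures with the bounded positive integrand $e^{\tr(\cdots)}$. Since the two functionals agree on every $e^{s\cdot}$, expanding $e^{sx}=\sum_m s^mx^m/m!$ (a series converging in $C^n$ on the compact hull) and matching powers of $s$ shows they agree on each monomial, hence on all polynomials; density of polynomials in $C^n(\operatorname{conv}(\widetilde X))$ then yields \eqref{eq:ieerror} for all $f\in C^n$.
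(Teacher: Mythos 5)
Your proposal is correct and follows essentially the same route as the paper: the Cramer-type determinant formula for the interpolation error, HCIZ applied in sizes $n$ and $n+1$, the disintegration of Haar measure on $\U(n+1)$ into the sphere $S^{2n+1}$ times $\U(n)$ via the frame $[\,\mathbf v\mid P_\mathbf v U\,]$, and extension from exponentials to polynomials to all of $C^n(\operatorname{conv}(\widetilde X))$ by density. The only differences are cosmetic (you run the chain of identities from the integral side toward the determinant ratio rather than the reverse, prove the determinant lemma instead of citing it, and extract polynomials by matching powers of $s$ rather than differentiating in $t$ at $0$), so there is nothing to add.
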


\begin{proof}

We first prove formula \eqref{eq:ieerror} for functions of the form $f(x)=e^{tx}$ with any $t\in\C$, and then extend it to all $f\in C^n(\operatorname{conv}(\widetilde{X}))$.

It will be convenient in the following to consider $t$ and $x$ as elements extending the sequences  $t_1,\ldots, t_n$ and $x_1,\ldots,x_n$, respectively, by identifying
\begin{equation*}
t_0 = t,\quad x_0=x.
\end{equation*}

We begin by recalling the following classical result from the general theory of linear interpolation:

\begin{lemma}[see e.g. Theorem 3.8.1 in \cite{davis}]
Let $I$ be any linear interpolation with distinct nodes $x_1,\ldots,x_n$ and basis functions $f_1,\ldots,f_n$.
Then, assuming $\det (f_k(x_m))_{k,m=1}^n\ne 0$, the error of interpolation of a function $f_0$ is given by
\begin{equation*}
f_0(x_0)-If_0(x_0) = \frac{\det(f_k(x_m))_{k,m=0}^n}{\det(f_k(x_m))_{k,m=1}^n}.
\end{equation*}
\end{lemma}
As a consequence, 
\begin{equation*}
f(x)-I_{\{x_k, t_k\}_{k=1}^n}^{\rm e}f(x) = \frac{\det(e^{t_k x_m})_{k,m=0}^n}{\det(e^{t_k x_m})_{k,m=1}^n}.
\end{equation*}
We apply the HCIZ formula to both numerator and denominator and obtain
\begin{equation*}
f(x)-I_{\{x_k, t_k\}_{k=1}^n}^\mathrm{e} f(x) = \frac{ \prod_{k=1}^n [(x-x_k)(t-t_k)] }{ n! Z_{\{x_k, t_k\}_{k=1}^n}}
\int_{\U(n+1)}
e^{\tr(\widetilde{T} \widetilde{U}^{\dagger}  \widetilde{X} \widetilde{U})} 
d\widetilde{U},
\end{equation*}
where integration is performed over unitary matrices of size $n+1$, and 
\begin{equation*}
\widetilde{X}=\operatorname{diag}(x, x_1,\ldots,x_n), \quad \widetilde{T}=\operatorname{diag}(t, t_1,\ldots,t_n).
\end{equation*}
Let us write the $(n+1)$-dimensional trace in this formula as the sum of the part corresponding to the first entry of the matrix $\widetilde{T}$ and the remaining $n$-dimensional trace.
To this end, denote by $\mathbf{v}$ the first column of the matrix $\widetilde{U}$, and by $\widetilde{U}'$ denote the remaining $(n+1)\times n$ sub-matrix. 
Then we can write
\begin{equation*}
\tr(\widetilde{T} \widetilde{U}^{\dagger}  \widetilde{X} \widetilde{U}) = 
\tr({T} \widetilde{U}'^{\dagger}  \widetilde{X} \widetilde{U}')+t\mathbf{v}^{\dagger} \widetilde{X}\mathbf{v}. 
\end{equation*}
We can replace integration over $\widetilde{U}$ by double integration, first over $\mathbf{v}\in S^{2n+1}$ and then over the complementary matrices $\widetilde{U}'$. It is convenient to fix for each given $\mathbf{v}$ one complementary matrix $P_{\mathbf{v}}$, and then make the substitution   
\begin{equation*}
\widetilde{U}' = P_{\mathbf{v}}U,
\end{equation*}
where $U\in \U(n)$. In this way we reduce integration over $\U(n+1)$ to integration over $S^{2n+1} \times \U(n)$. 
The resulting measure of integration is the product of the normalized Lebesgue measure on the sphere with Haar measure on $\U(n)$. 
As a result, we get 
\begin{align*}
f(x)-I_{\{x_k, t_k\}_{k=1}^n}^\mathrm{e} f(x)  = & \frac{ \prod_{k=1}^n (x-x_k) }{ n! Z_{\{x_k, t_k\}_{k=1}^n} }\\
&\times \int_{S^{2n+1}} \int_{\U(n)}
e^{\tr(T U^{\dagger} P_\mathbf{v}^{\dagger} \widetilde{X} P_\mathbf{v} U)} 
e^{t\mathbf{v}^{\dagger} \widetilde{X}\mathbf{v}} 
\prod_{k=1}^n (t-t_k)
d\mathbf{v}dU
\end{align*}
Since 
\begin{equation*}
\Big[\prod_{k=1}^n \big(\frac{d}{dq}-t_k\big)\Big] 
e^{tq}\Big|_{q = \mathbf{v}^{\dagger} \widetilde{X}\mathbf{v}}
=
e^{t\mathbf{v}^{\dagger} \widetilde{X}\mathbf{v}} 
\prod_{k=1}^n (t-t_k),
\end{equation*}
the proof is complete for $f(x)=e^{tx}$.

It remains to extend formula \eqref{eq:ieerror} to all functions $f\in C^n(\operatorname{conv}(\widetilde{X}))$.
This can be done by standard arguments, using the formula's linearity.
First note that the formula holds for all polynomials, by multiply differentiating it written for $f(x)=e^{tx}$   with respect to $t$ at $t=0$. 
Then, for any $f\in C^n(\operatorname{conv}(\widetilde{X}))$, apply the Weierstrass theorem to $f^{(n)}$ to show that for any $\epsilon$ there is a polynomial $p$ such that $|f^{(k)}(x')-p^{(k)}(x')|<\epsilon$ for all $x'\in \operatorname{conv}(\widetilde{X})$ and all derivatives $k=0,1,\ldots,n$. 
\end{proof}

\noindent
{\bf Remark.}  Formula \eqref{eq:ieerror} leaves some freedom for the choice of $P_{\mathbf{v}}$. One natural choice is the one diagonalizing the matrix  
$P_\mathbf{v}^{\dagger} \widetilde{X} P_\mathbf{v}$ and placing its eigenvalues according to the order of eigenvalues in $X$; see the proof of Theorem \ref{thm:2}  in the next section.

\medskip
It follows from \eqref{eq:g2e} that the interpolation errors for $I^\mathrm{g}$ and $I^\mathrm{e}$ are simply related by  
\begin{equation}\label{eq:errg2e}
\mathbbm{1}-I_{\{x_k\}_{k=1}^n}^\mathrm{g} = 
e^{-\hat{x}^2/2}\circ (\mathbbm{1}- I_{\{x_k,x_k\}_{k=1}^n}^\mathrm{e})\circ e^{\hat{x}^2/2}.
\end{equation}
Theorem 1 then immediately implies an error formula for the Gaussian interpolation:
\begin{corol} For any $f\in C^n(\operatorname{conv}(\widetilde{X}))$,
\begin{align*} 
f(x)-& I_{\{x_k\}_{k=1}^n}^\mathrm{g} f(x) =  \frac{ \prod_{k=1}^n (x-x_k) }{ n! Z_{\{x_k,x_k\}_{k=1}^n} }\\
&\times \int_{S^{2n+1}} \int_{\U(n)}
e^{\tr(X U^{\dagger} P_\mathbf{v}^{\dagger} \widetilde{X} P_\mathbf{v} U)} 
e^{-x^2/2}\Big[\prod_{k=1}^n \big(\frac{d}{dq}-x_k\big)\Big] 
e^{q^2/2}f(q)\Big|_{q = \mathbf{v}^{\dagger} \widetilde{X}\mathbf{v}} d\mathbf{v}dU.
\end{align*}
\end{corol}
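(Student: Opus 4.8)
The plan is to obtain the Gaussian error formula directly from the exponential error formula of Theorem~\ref{thm:1}, using the operator identity \eqref{eq:errg2e}. Writing $\widetilde{f}(x)=e^{x^2/2}f(x)$, identity \eqref{eq:errg2e} applied to $f$ reads
\[
f(x)-I_{\{x_k\}_{k=1}^n}^\mathrm{g} f(x)=e^{-x^2/2}\Big[\widetilde{f}(x)-I_{\{x_k,x_k\}_{k=1}^n}^\mathrm{e}\widetilde{f}(x)\Big],
\]
so the task reduces to inserting the known expression for the bracketed $I^\mathrm{e}$-error and multiplying by the scalar $e^{-x^2/2}$.

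First I would verify that Theorem~\ref{thm:1} is applicable to $\widetilde{f}$ with the choice $t_k=x_k$ (so that $T=X$). Since $e^{q^2/2}$ is entire, the product $\widetilde{f}=e^{\hat{x}^2/2}f$ lies in $C^n(\operatorname{conv}(\widetilde{X}))$ whenever $f$ does; hence the hypotheses of Theorem~\ref{thm:1} hold, and the normalizing integral $Z_{\{x_k,t_k\}_{k=1}^n}$ specializes to $Z_{\{x_k,x_k\}_{k=1}^n}$.

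Then I would substitute $t_k=x_k$ into \eqref{eq:ieerror}: the matrix $T$ in the exponent becomes $X$, giving the factor $e^{\tr(X U^{\dagger}P_\mathbf{v}^{\dagger}\widetilde{X} P_\mathbf{v}U)}$, and the differential operator becomes $\prod_{k=1}^n\big(\tfrac{d}{dq}-x_k\big)$ acting on $\widetilde{f}(q)=e^{q^2/2}f(q)$, evaluated at $q=\mathbf{v}^{\dagger}\widetilde{X}\mathbf{v}$. Multiplying the whole expression by $e^{-x^2/2}$, which is constant with respect to the integration variables $\mathbf{v},U$ and the dummy variable $q$, yields precisely the claimed formula.

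The only point requiring care—rather than a genuine obstacle—is the bookkeeping of the two Gaussian factors: the outer factor $e^{-x^2/2}$ is evaluated at the interpolation point $x$ and may be pulled outside the double integral, whereas the inner factor $e^{q^2/2}$ is part of the argument of the derivative operator and must remain inside, evaluated at $q=\mathbf{v}^{\dagger}\widetilde{X}\mathbf{v}$. One should also confirm that the placement of the multiplication operators $e^{\pm\hat{x}^2/2}$ in \eqref{eq:g2e} matches the placement of these factors, but this is immediate from the definition of $\widetilde{f}$ together with \eqref{eq:errg2e}.
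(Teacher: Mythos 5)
Your proposal is correct and follows exactly the paper's own route: the paper derives this corollary as an immediate consequence of Theorem \ref{thm:1} via the conjugation identity \eqref{eq:errg2e}, specializing $t_k=x_k$ and applying the exponential error formula to $\widetilde{f}=e^{\hat{x}^2/2}f$. Your additional observations—that $\widetilde{f}\in C^n(\operatorname{conv}(\widetilde{X}))$ because $e^{q^2/2}$ is smooth, and that the outer factor $e^{-x^2/2}$ is constant with respect to $\mathbf{v}$, $U$, and $q$—are exactly the (routine) checks the paper leaves implicit when it says the formula follows ``immediately.''
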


Theorem \ref{thm:1} also allows us to show that the exponential interpolation converges to the polynomial one if $t_k\to 0 $ for all $k$, by viewing error formula \eqref{eq:ieerror} as a generalization of the classical error formula for polynomial interpolation.

\begin{corol}\label{corol:2} For any $f\in C^n(\operatorname{conv}(\widetilde{X}))$,
\begin{equation*}
\lim_{\{t_k\to 0\}_{k=1}^n} I_{\{x_k,t_k\}_{k=1}^n}^{\rm e}f(x)=I_{\{x_k\}_{k=1}^n}^{\rm p}f(x).
\end{equation*}
\end{corol}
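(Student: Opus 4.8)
The plan is to pass to the limit $\{t_k\to 0\}$ directly in the error formula \eqref{eq:ieerror} of Theorem \ref{thm:1} and to recognize the result as the classical Newton error formula for polynomial interpolation. Throughout I keep the convention $x_0=x$ from the proof of Theorem \ref{thm:1}.

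First I would record the effect of letting the matrix $T=\operatorname{diag}(t_1,\dots,t_n)$ tend to $0$ on each ingredient of \eqref{eq:ieerror}. Since the integration domain $S^{2n+1}\times\U(n)$ is compact and $\|U^\dagger P_\mathbf{v}^\dagger\widetilde{X}P_\mathbf{v}U\|$ is bounded uniformly, the factor $e^{\tr(TU^\dagger P_\mathbf{v}^\dagger\widetilde{X}P_\mathbf{v}U)}$ tends to $1$ uniformly; likewise $Z_{\{x_k,t_k\}_{k=1}^n}=\int_{\U(n)}e^{\tr(TU^\dagger XU)}\,dU\to\int_{\U(n)}dU=1$. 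The differential operator expands as $\prod_{k=1}^n\big(\tfrac{d}{dq}-t_k\big)=\sum_{j=0}^n(-1)^{n-j}e_{n-j}(t_1,\dots,t_n)\tfrac{d^j}{dq^j}$, whose coefficients (elementary symmetric polynomials) all tend to those of $\tfrac{d^n}{dq^n}$, so applied to $f\in C^n$ it converges to $f^{(n)}$ uniformly on the compact set $\operatorname{conv}(\widetilde{X})$. By dominated convergence the whole right-hand side converges, and as the limiting integrand no longer depends on $U$ the normalized Haar integral over $\U(n)$ contributes a factor $1$, leaving
\begin{equation*}
\lim_{\{t_k\to 0\}_{k=1}^n}\big(f(x)-I^{\rm e}_{\{x_k,t_k\}_{k=1}^n}f(x)\big)=\frac{\prod_{k=1}^n(x-x_k)}{n!}\int_{S^{2n+1}}f^{(n)}\big(\mathbf{v}^\dagger\widetilde{X}\mathbf{v}\big)\,d\mathbf{v}.
\end{equation*}

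The core step is to identify this sphere integral with a divided difference. One has $\mathbf{v}^\dagger\widetilde{X}\mathbf{v}=\sum_{i=0}^n x_i|v_i|^2$, a convex combination of $x,x_1,\dots,x_n$. I would invoke the classical fact that for $\mathbf{v}$ uniform on $S^{2n+1}$ the vector $(|v_0|^2,\dots,|v_n|^2)$ is uniformly distributed on the standard simplex $\{\lambda_i\ge 0,\ \sum_{i=0}^n\lambda_i=1\}$ (seen at once by writing $\mathbf{v}=\mathbf{Z}/|\mathbf{Z}|$ for i.i.d.\ standard complex Gaussians $Z_i$, so the $|Z_i|^2$ are i.i.d.\ exponential and their normalization is $\mathrm{Dirichlet}(1,\dots,1)$). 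Writing $\sigma_n=\{\lambda_1,\dots,\lambda_n\ge 0,\ \sum_{k=1}^n\lambda_k\le 1\}$ and $\lambda_0:=1-\sum_{k=1}^n\lambda_k$, and using that $\sigma_n$ has volume $1/n!$, the sphere integral equals $n!\int_{\sigma_n}f^{(n)}\big(\lambda_0 x+\sum_{k=1}^n\lambda_k x_k\big)\,d\lambda$, which by the Hermite--Genocchi formula is exactly $n!\,f[x,x_1,\dots,x_n]$.

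Substituting back, the limiting error becomes $\prod_{k=1}^n(x-x_k)\,f[x,x_1,\dots,x_n]$, which is precisely the Newton form of the polynomial interpolation error $f(x)-I^{\rm p}_{\{x_k\}_{k=1}^n}f(x)$. Cancelling the ($t$-independent) value $f(x)$ on both sides yields $\lim_{\{t_k\to 0\}_{k=1}^n}I^{\rm e}_{\{x_k,t_k\}_{k=1}^n}f(x)=I^{\rm p}_{\{x_k\}_{k=1}^n}f(x)$, as claimed. I expect the only genuinely substantive point to be the geometric identity turning the sphere average of $f^{(n)}$ into a divided difference; the interchange of limit and integration is routine thanks to the compactness of $S^{2n+1}\times\U(n)$ and the $C^n$ regularity of $f$.
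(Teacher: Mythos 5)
Your proposal is correct and follows essentially the same route as the paper: pass to the limit $\{t_k\to 0\}$ in the error formula of Theorem \ref{thm:1} (the exponential factor and $Z_{\{x_k,t_k\}_{k=1}^n}$ tend to $1$, the differential operator tends to $d^n/dq^n$, the $\U(n)$ integration becomes trivial), and then identify the resulting sphere average of $f^{(n)}$ with the simplex average in the Hermite--Genocchi error representation of polynomial interpolation. The only difference is in how that sphere-to-simplex identity is justified --- the paper performs an explicit substitution $a_k=\sqrt{s_k}\cos\phi_k$, $b_k=\sqrt{s_k}\sin\phi_k$ with a Dirac-delta manipulation, while you invoke the equivalent probabilistic fact that the squared moduli of a uniform point on $S^{2n+1}$ are $\mathrm{Dirichlet}(1,\dots,1)$, i.e.\ uniform on the simplex --- which is a cosmetic (and arguably cleaner) variant of the same computation.
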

\begin{proof}
Recall the well-known error expression for the polynomial interpolation based on the Hermite-Genocchi formula for divided differences:
\begin{equation}\label{eq:iperror}
f(x)-I^\mathrm{p}_{\{x_k\}_{k=1}^n} f(x) =  \frac{ \prod_{k=1}^n (x-x_k) }{ n!}
\int_{\Delta_n} f^{(n)}\big(\sum_{k=0}^n s_k x_k\big) d\mathbf{s},
\end{equation}
where $\mathbf s = (s_0, s_1, \ldots s_n)$ and the integration is over normalized Lebesgue measure on  the $n$-dimensional simplex $\Delta_n =\{\mathbf s: \sum_{k=0}^n s_k = 1, s_k\ge 0\}$.

To prove the corollary, we use error formulas \eqref{eq:ieerror} and \eqref{eq:iperror} to show that 
\begin{equation*}
\lim_{\{t_k\to 0\}_{k=1}^n} \big(f(x)- I_{\{x_k,t_k\}_{k=1}^n}^{\rm e}f(x)\big)=f(x)-I_{\{x_k\}_{k=1}^n}^{\rm p}f(x).
\end{equation*}

Indeed, first observe that in this limit the differential operator on the r.h.s. of \eqref{eq:ieerror} tends to $d^n/dq^n$, $Z_{\{x_k, t_k\}_{k=1}^n}$ tends to 1, and the exponential factor in the integrand tends to 1 so that the dependence on $U$ vanishes making  integration over $\U(n)$ trivial: 

\begin{equation*}
\lim_{\{t_k\to 0\}_{k=1}^n}\big(f(x)-I_{\{x_k,t_k\}_{k=1}^n}^\mathrm{e} f(x)\big) =  \frac{ \prod_{k=1}^n (x-x_k) }{ n!}
\int_{S^{2n+1}}
f^{(n)}
(\mathbf{v}^{\dagger} \widetilde{X}\mathbf{v}) d\mathbf{v}.
\end{equation*}

To see how the integration over $S^{2n+1}$ transforms, write $\mathbf{v} = (a_0+ib_0, \ldots, a_n+ib_n)$ and substitute $a_k=\sqrt{s_k}\cos\phi_k, b_k = \sqrt{s_k}\sin \phi_k$ for each $k$; the Jacobian of this substitution equals $2^n$. Then, using Dirac's delta $\delta$ and the identity $\delta(|\mathbf v|-1)=2\delta((|\mathbf v|-1)(|\mathbf v|+1))= 2\delta(|\mathbf v|^2-1)$,

\begin{align*}
\int_{S^{2n+1}} &
f^{(n)}
(\mathbf{v}^{\dagger} \widetilde{X}\mathbf{v}) d\mathbf{v} \\
& =
\frac{1}{\operatorname{Vol}_{2n+1}(S^{2n+1})} \int_{\R^{2n+2}} f^{(n)} 
(\mathbf{v}^{\dagger} \widetilde{X}\mathbf{v}) \delta(|\mathbf v|-1) d\mathbf{v}\\
& =
\frac{2}{\operatorname{Vol}_{2n+1}(S^{2n+1})} \int_{\R^{2n+2}} f^{(n)} 
(\mathbf{v}^{\dagger} \widetilde{X}\mathbf{v}) \delta(|\mathbf v|^2-1) d\mathbf{v}\\
& =
\frac{n!}{(2\pi)^{n+1}} \int_{\R^{2n+2}} f^{(n)} \Big(\sum_{k=0}^n(a_k^2+b_k^2)x_k\Big) \delta\Big( \sum_{k=0}^n 
(a_k+b_k)^2 -1\Big) \prod_{k=0}^n da_k db_k \\
& = \frac{n!}{(2\pi)^{n+1}} 
\int_{\left\{\substack{0\le s_k\\ 0\le \phi_k\le 2\pi}\right\}_{k=0}^n} f^{(n)} \Big(\sum_{k=0}^n s_k x_k\Big) \delta\big(\sum\nolimits_{k=0}^n s_k-1\big)\prod_{k=0}^n ds_k d\phi_k \\
& = n! 
\int_{\{0\le s_k\}_{k=0}^n} f^{(n)} \Big(\sum_{k=0}^n s_k x_k\Big) \delta\big(\sum\nolimits_{k=0}^n s_k-1\big)\prod_{k=0}^n ds_k \\
&= \int_{\Delta_n} f^{(n)}\big(\sum_{k=0}^n s_k x_k\big) d\mathbf{s}. \end{align*}
\end{proof}

\section{Convergence of interpolation for analytic functions}\label{sec:3}
In this section we use Theorem \ref{thm:1} to prove convergence of interpolation on a bounded segment $[a,b]\subset\R$ for functions $f$ analytic in a sufficiently large complex domain $\mathcal D\subset \C$.
We state our convergence theorem simultaneously for all three types of interpolation appearing in the previous section, thus emphasizing the similarity between them. 

\begin{theor} \label{thm:2}
Suppose $f$ is analytic in a complex domain $\mathcal D\supset [a,b]$, and  $\operatorname{dist}([a,b],\partial\mathcal D)>\rho>0$.
Let $I$ denote any of the interpolations $I^{\mathrm{e}}, I^{\mathrm{g}}, I^{\mathrm{p}}$ for a sequence of distinct nodes $x_1,x_2,\ldots\subset[a,b]$;
in the case of $I^{\mathrm{e}}$ assume additionally that there exists $R$ such that $|t_k|<R$ for all $k$.

Then 
\begin{equation}\label{thm:main}
\sup_{x\in[a,b]} |f(x) - I_n f(x)|\le c \big(\frac{b-a}{\rho}\big)^n
\end{equation}
with some constant $c=c(f,a,b,\rho,R)$.
\end{theor}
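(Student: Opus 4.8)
The plan is to estimate directly the three error formulas — the classical Hermite--Genocchi formula \eqref{eq:iperror} for $I^{\mathrm p}$, formula \eqref{eq:ieerror} for $I^{\mathrm e}$, and the corollary to Theorem \ref{thm:1} for $I^{\mathrm g}$ — and to read off the geometric rate $(\tfrac{b-a}{\rho})^n$ from two ingredients common to all three. The node product obeys $\bigl|\prod_{k=1}^n(x-x_k)\bigr|\le(b-a)^n$ since $x,x_k\in[a,b]$, and the $n$-th order ``divided-difference'' object is controlled by a Cauchy estimate. I would set $M_\rho:=\sup\{|f(z)|:\operatorname{dist}(z,[a,b])\le\rho\}$, which is finite because the closed $\rho$-neighbourhood of $[a,b]$ lies in $\mathcal D$; then for every $w\in[a,b]$ the Cauchy inequality on the circle $|z-w|=\rho$ gives $|f^{(j)}(w)|\le j!\,M_\rho/\rho^{j}$. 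The polynomial case is then immediate and serves as the template: in \eqref{eq:iperror} the argument $\sum_k s_k x_k$ lies in $[a,b]$ and the simplex carries a probability measure, so $|f(x)-I^{\mathrm p}_nf(x)|\le\tfrac{(b-a)^n}{n!}\cdot\tfrac{n!\,M_\rho}{\rho^n}=M_\rho(\tfrac{b-a}{\rho})^n$.

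For the exponential case I would first bound the differential operator in \eqref{eq:ieerror}. Expanding $\prod_{k=1}^n(\tfrac{d}{dq}-t_k)=\sum_{j=0}^n(-1)^{n-j}\sigma_{n-j}(t)\,\tfrac{d^j}{dq^j}$ with $\sigma_m$ the elementary symmetric polynomials, using $|\sigma_m(t)|\le\binom nm R^{m}$ and the Cauchy estimate above (legitimate since $q=\mathbf v^\dagger\widetilde X\mathbf v\in\operatorname{conv}(\widetilde X)\subseteq[a,b]$), one gets the uniform bound
\begin{equation*}
\Bigl|\Bigl[\prod_{k=1}^n\bigl(\tfrac{d}{dq}-t_k\bigr)\Bigr]f(q)\Bigr|
\le M_\rho\,n!\sum_{m=0}^n\frac{(R\rho)^m}{m!\,\rho^n}
\le \frac{M_\rho\,n!\,e^{R\rho}}{\rho^n},\qquad q\in[a,b].
\end{equation*}
Since the HCIZ integrand $e^{\tr(\cdots)}$ in \eqref{eq:ieerror} is positive, the double integral is at most this uniform bound times the total mass of the weight, which (abbreviating $Z=Z_{\{x_k,t_k\}_{k=1}^n}$) equals $N/Z$ with $N=\int_{S^{2n+1}}\int_{\U(n)}e^{\tr(TU^\dagger P_\mathbf v^\dagger\widetilde X P_\mathbf v U)}\,d\mathbf v\,dU$. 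Writing $Y_\mathbf v:=P_\mathbf v^\dagger\widetilde X P_\mathbf v$ and using that the inner Haar integral $\mathcal I(T,B):=\int_{\U(n)}e^{\tr(TU^\dagger B U)}dU$ depends only on the eigenvalues of $B$, this mass is $N/Z=\int_{S^{2n+1}}\bigl(\mathcal I(T,Y_\mathbf v)/\mathcal I(T,X)\bigr)\,d\mathbf v$, and the whole theorem reduces to bounding this ratio by a constant independent of $n$.

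This last point is the main obstacle: a crude majorization bound on $\mathcal I$ only yields $e^{O(nR(b-a))}$, which would spoil the geometric rate, so genuine cancellation must be exploited. I would resolve it as follows. The matrix $Y_\mathbf v$ is the compression of $\widetilde X=\operatorname{diag}(x,x_1,\dots,x_n)$ to the hyperplane $\mathbf v^\perp$, while $X$ is the principal submatrix of $\widetilde X$ obtained by deleting the first coordinate; by Cauchy interlacing the sorted eigenvalues of both $Y_\mathbf v$ and $X$ lie in the same consecutive gaps of the sorted spectrum $\xi_1\le\cdots\le\xi_{n+1}$ of $\widetilde X$, so the two sorted spectra are $\ell_1$-close, $\sum_i|y_i(\mathbf v)-x_{(i)}|\le\sum_i(\xi_{i+1}-\xi_i)=\xi_{n+1}-\xi_1\le b-a$. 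On the other hand $\Phi(B):=\log\mathcal I(T,B)$ is a symmetric function of the eigenvalues whose partial derivative $\partial_{b_j}\Phi$ equals the average of $(UTU^\dagger)_{jj}=\sum_k|U_{jk}|^2t_k$ over the tilted probability measure $\propto e^{\tr(TU^\dagger B U)}dU$; since this is a convex combination of the $t_k$, one has $\|\nabla\Phi\|_\infty\le R$ everywhere. Integrating $\nabla\Phi$ along the segment joining the two sorted spectra gives $|\Phi(Y_\mathbf v)-\Phi(X)|\le R(b-a)$, whence $\mathcal I(T,Y_\mathbf v)/\mathcal I(T,X)\le e^{R(b-a)}$ uniformly in $\mathbf v$ and $n$, and therefore $N/Z\le e^{R(b-a)}$. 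Combining the three estimates yields $|f(x)-I^{\mathrm e}_nf(x)|\le M_\rho\,e^{R(\rho+b-a)}(\tfrac{b-a}{\rho})^n$.

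Finally, the Gaussian case follows without repeating the analysis: by \eqref{eq:errg2e}, $f(x)-I^{\mathrm g}_nf(x)=e^{-x^2/2}\bigl[(\mathbbm{1}-I^{\mathrm e}_{\{x_k,x_k\}_{k=1}^n})\widetilde f\bigr](x)$ with $\widetilde f(q)=e^{q^2/2}f(q)$, which is analytic in the same domain $\mathcal D$. Applying the exponential estimate to $\widetilde f$ with the admissible bound $R=\max(|a|,|b|)$ on the nodes $t_k=x_k$, and using $e^{-x^2/2}\le1$ on $[a,b]$, gives the same geometric rate with a constant now depending on $\sup_{\operatorname{dist}(z,[a,b])\le\rho}|\widetilde f(z)|$. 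This completes all three cases.
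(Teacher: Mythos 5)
Your proposal is correct, and its overall skeleton coincides with the paper's proof: both start from the error formula \eqref{eq:ieerror}, bound the node product by $(b-a)^n$, control the $n$-th order differential operator applied to $f$ by a Cauchy estimate yielding a factor $n!\,e^{R\rho}/\rho^n$ (the paper's Lemma \ref{lm:phi} does exactly this, via the contour form of Cauchy's formula rather than your derivative inequalities --- an immaterial difference), reduce the Gaussian case to the exponential one via \eqref{eq:errg2e}, and isolate as the crux the bound $e^{R(b-a)}$ on the ratio of HCIZ weights, proved in both cases by Cauchy interlacing of the spectra of the two compressions of $\widetilde X$. The one genuinely different step is how that crux is executed. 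The paper's Lemma \ref{lm:kk} proves the \emph{pointwise-in-$U$} bound $\widetilde K_n(x,\mathbf{v},U)/K_n(U)\le e^{R(b-a)}$: it fixes the eigenvalue-ordering choice of $P_\mathbf{v}$ (the Remark after Theorem \ref{thm:1}), so that $P_\mathbf{v}^{\dagger}\widetilde X_n P_\mathbf{v}-X_n$ is diagonal, and applies the elementary trace inequality $|\operatorname{tr}(BC)|\le\|B\|\operatorname{tr}|C|$ together with interlacing. You instead bound the ratio only \emph{after} integrating over $\U(n)$, by observing that $\Phi(B)=\log\int_{\U(n)}e^{\tr(TU^{\dagger}BU)}dU$ depends only on the eigenvalues of $B$ and has all partial derivatives in eigenvalue coordinates equal to Gibbs averages of convex combinations of the $t_k$, hence bounded by $R$; integrating the gradient along the segment between the two sorted spectra and using the same interlacing-based $\ell_1$ estimate $\le b-a$ gives $\mathcal I(T,Y_\mathbf{v})/\mathcal I(T,X)\le e^{R(b-a)}$. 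Both routes are valid and yield essentially the same constant ($M_\rho e^{R(\rho+b-a)}$ versus the paper's $\frac{e^{(\rho+b-a)R}}{2\pi\rho}\oint_\gamma|f(z)||dz|$); the paper's argument buys elementarity (pure linear algebra, no differentiation under the integral sign), while yours buys independence from any ordering convention in the choice of $P_\mathbf{v}$, since only the spectrum of $P_\mathbf{v}^{\dagger}\widetilde X P_\mathbf{v}$ enters. A final cosmetic difference: you treat the polynomial case directly from \eqref{eq:iperror}, whereas the paper dismisses it as the $t_k\to 0$ limit via Corollary \ref{corol:2}; both are immediate.
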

\begin{proof}
We give the proof only for $I^{\mathrm{e}}$;  the result for $I^{\mathrm{g}}$ then follows immediately from relation \eqref{eq:errg2e}, while the result for the polynomial interpolation can be seen as a trivial special case thanks to Corollary \ref{corol:2} of Theorem \ref{thm:1}.

It is convenient to represent the error $f(x)-I^{\mathrm{e}}_n f(x)$ in the form
\begin{equation*}
f(x)-I^{\mathrm{e}}_n f(x) = \frac{
\int_{S^{2n+1}} \int_{\U(n)}\widetilde{K}_n(x,\mathbf{v}, U) \phi_n(x,\mathbf{v}) d\mathbf{v} dU
}{
\int_{\U(n)} {K_n (U)} dU
}, 
\end{equation*}
where 

\begin{align}
\phi_n(x,\mathbf{v}) &= \frac{ \prod_{k=1}^n (x-x_k) }{ n!}
\Big[\prod_{k=1}^n \big(\frac{d}{dq}-t_k\big)\Big] 
f(q)\Big|_{q = \mathbf{v}^{\dagger} \widetilde{X}_n\mathbf{v}},\label{eq:phin}\\
\widetilde{K}_n(x,\mathbf{v}, U) &= e^{\tr(T_n U^{\dagger} P_\mathbf{v}^{\dagger} \widetilde{X}_n P_\mathbf{v} U)},\nonumber\\
K_n(U) &=  e^{\tr(T_n U^{\dagger} X_nU)}.\nonumber
\end{align}

\begin{lemma} \label{lm:phi} Under assumptions of Theorem \ref{thm:2}, 
\begin{equation*}
\sup_{x\in[a,b],\mathbf{v}\in S^{2n+1}}|\phi_n(x,\mathbf{v})| \le c_1 \big(\frac{b-a}{\rho}\big)^n
\end{equation*}
with some constant $c_1=c_1(f,a,b,\rho,R)$.
\end{lemma}
\begin{proof}
Choose a contour $\gamma\subset \mathcal D$ enclosing the segment $[a,b]$ so that 
\begin{equation*}
\min_{z\in \gamma, q\in[a,b]} |z-q|>\rho.
\end{equation*}
We use Cauchy's formula for $f(q)$,
\begin{equation*}
f(q) = \frac{1}{2\pi i}\oint_{\gamma}\frac{f(z)dz}{z-q},
\end{equation*}
and substitute it in \eqref{eq:phin}.
Expanding the product of first order differential operators,
\begin{equation*}
\Big|\frac{1}{n!}\Big[\prod_{k=1}^n \big(\frac{d}{dq}-t_k\big)\Big] \frac{1}{z-q}\Big| 
\le\frac{1}{n!}\sum_{s=0}^n \binom{n}{s}\frac{(n-s)!R^s}{|z-q|^{n-s+1}}
\le\frac{1}{\rho^{n+1}}\sum_{s=0}^n\frac{\rho^s R^s}{s!} \le\frac{e^{\rho R}}{\rho^{n+1}}.
\end{equation*}
Since $|x-x_k|\le b-a$ for all $k$, it follows that 
\begin{equation*}
|\phi_n(x,\mathbf{v})|\le (b-a)^n \frac{e^{\rho R}}{\rho^{n+1}} \frac{1}{2\pi }\oint_{\gamma}|f(z)| |dz|,
\end{equation*}
which implies lemma's claim with
\begin{equation*}
c_1 = \frac{e^{\rho R}}{2\pi\rho }\oint_{\gamma}|f(z)| |dz|.
\end{equation*}
\end{proof}

At this point we need to specify the choice of $P_{\mathbf{v}}$. 
Let $\sigma$ be a permutation of $x_1,\ldots,x_n$ in the increasing order:
\begin{equation*}
x_{\sigma(1)}\le\ldots\le x_{\sigma(n)}.
\end{equation*}
Choose $P_{\mathbf{v}}$ so that  $P_\mathbf{v}^{\dagger} \widetilde{X}_n P_\mathbf{v}
= \operatorname{diag} (\widetilde{x}_1,\ldots,\widetilde{x}_n)$, where 
\begin{equation*}
\widetilde{x}_{\sigma(1)}\le\ldots\le \widetilde{x}_{\sigma(n)}.
\end{equation*}

\begin{lemma}\label{lm:kk}
With the above choice of $P_{\mathbf{v}}$,
\begin{equation}\label{eq:lm2}
e^{R(a-b)}\le\frac{\widetilde{K}_n(x,\mathbf{v}, U)}{K_n(U)}\le e^{R(b-a)}.
\end{equation}
for any $x\in[a,b], \mathbf{v}\in S^{2n+1}$ and $U\in\U(n)$.
\end{lemma}
\begin{proof}
We have 
\begin{align*}
\Big|\ln\frac{\widetilde{K}_n(x,\mathbf{v}, U)}{K_n(U)}\Big| &=
|\tr(UT_n U^{\dagger} 
(P_\mathbf{v}^{\dagger} \widetilde{X}_n P_\mathbf{v}-X_n))|\\
&\le \|T_n\|\operatorname{tr}|P_\mathbf{v}^{\dagger} \widetilde{X}_n P_\mathbf{v}-X_n|\\
&\le R \operatorname{tr}|P_\mathbf{v}^{\dagger} \widetilde{X}_n P_\mathbf{v}-X_n|,
\end{align*}
where  we have used the well-known  inequality $\operatorname{tr}(BC)\le \|B\|\operatorname{tr}|C|$ with $|C|=(C^\dagger C)^{1/2}$.

Thanks to our choice of $P_{\mathbf{v}}$, $P_\mathbf{v}^{\dagger} \widetilde{X}_n P_\mathbf{v}-X_n$ is a diagonal operator, and
\begin{equation*}
\operatorname{tr}|P_\mathbf{v}^{\dagger} \widetilde{X}_n P_\mathbf{v}-X_n|
= \sum_{k=1}^n |\widetilde{x}_{\sigma(k)}-x_{\sigma(k)}|.
\end{equation*}  
It remains to observe that 
\begin{equation}\label{eq:eig}
\sum_{k=1}^n |\widetilde{x}_{\sigma(k)}-x_{\sigma(k)}|\le b-a.
\end{equation}
Recall that the eigenvalues of a restriction of a quadratic form to a subspace of co-dimension 1 alternate with the original eigenvalues; 
in particular the eigenvalues of $P_\mathbf{v}^{\dagger} \widetilde{X}_n P_\mathbf{v}$ alternate with the eigenvalues of $ \widetilde{X}_n $.
From this and from our convention on the order of eigenvalues it is easy to see that the open intervals $\{(\widetilde{x}_{\sigma(k)}, x_{\sigma(k)})\}_{k=1}^n$ do not overlap. 
For example, if $x\in[\min_{k=1,\ldots,n} x_k, \max_{k=1,\ldots,n} x_k]$, then there exists $n_0$ such that $x_{\sigma(n_0)}\le x\le x_{\sigma(n_0)+1}$, and we can write
\begin{equation*}
x_{\sigma(1)} \le \widetilde{x}_{\sigma(1)}\le\ldots \le
x_{\sigma(n_0)}\le \widetilde{x}_{\sigma(n_0)}\le
x\le \widetilde{x}_{\sigma(n_0)+1} \le x_{\sigma(n_0)+1}\le \ldots \le
\widetilde{x}_{\sigma(n)}\le {x}_{\sigma(n)},
\end{equation*}
which makes the absence of overlapping clear. 
The other cases, $x<\min_{k=1,\ldots,n} x_k$ and $x>\max_{k=1,\ldots,n} x_k$, are considered similarly.
Since all the intervals $\{(\widetilde{x}_{\sigma(k)}, x_{\sigma(k)})\}_{k=1}^n$ at the same time lie in $[a,b]$, we conclude \eqref{eq:eig}. 
\end{proof}

The claim \eqref{thm:main} of the theorem now follows immediately from  Lemma \ref{lm:phi} and the upper bound in \eqref{eq:lm2}, with
\begin{equation*}
c = e^{ R(b-a)}c_1 = \frac{e^{(\rho+b-a) R}}{2\pi\rho }\oint_{\gamma}|f(z)| |dz|.
\end{equation*}
\end{proof}
The above theorem proves convergence  only if the analyticity domain of the interpolated function is sufficiently large. It is known that if the domain is not large enough, the interpolants may diverge: in the case of polynomial interpolation this is the well-known Runge phenomenon \cite{runge}, and a similar effect holds for the RBF interpolation, see \cite{Platte:2005:PPT:1072891.1084593,
fornberg}.

\section{Optimization by expected improvement for analytic functions}\label{sec:4}
In this section we describe an application of Theorem \ref{thm:2} to optimization by expected improvement (EI).
Optimization by EI is a kind of stochastic Bayesian optimization popular in engineering application \cite{Jones:1998:EGO}. 
We consider the simplest version of the algorithm with a centered Gaussian process and a fixed covariance function.

Suppose that we are searching for the global minimum of a function $f$ on a segment $[a,b]$. We iteratively sample points $x_1,x_2,\ldots \subset [a,b]$, and evaluate the function $f$ at these points. For each $n$ we define the current best result as 
\begin{equation*}
f_n^* = \min_{k=1,\ldots,n} f(x_k).
\end{equation*} 
The question is whether $f_n^*$ converges to the global minimum 
\begin{equation*}
f^*=\min_{x\in[a,b]}f(x),
\end{equation*} and how fast if yes.

In optimization by EI $f$ is assumed to be a realization of a centered Gaussian process $\{\xi_x\}_{x\in[a,b]}$ with a given covariance function $G(x,x')=\mathsf E(\xi_{x}\xi_{x'})$, 
and the choice of each $x_{n+1}$ is determined from the history $\{x_k,f(x_k)\}_{k=1}^{n}$ by maximizing the expectation of improvement of the current best result for the process conditioned on the event $\{\xi_{x_k}=f(x_k)\}_{k=1}^{n}$:
\begin{equation*}
x_{n+1} = \arg \max_{x\in [a,b]} \mathfrak{I}_n (x),
\end{equation*}
where
\begin{equation*}
\mathfrak{I}_n (x) 
= \mathsf{E} \big( f^*_{n} - \min (f^*_n, \xi_x)
\big| \{\xi_{x_k} = f(x_k)\}_{k=1}^n\big).
\end{equation*}
In this way, each optimization iteration is reduced to an auxiliary optimization problem $\mathfrak{I}_n\to\max_x,$ which can be written in an analytic form and readily solved numerically for moderate values of $n$.   
See \cite{yarotsky_ei} for more details and a bibliography on EI.

In the sequel we assume that the auxiliary optimization problem is exactly solved at each step $n$.

A popular choice of  covariance function in optimization by EI is the Gaussian function  
\begin{equation}\label{eq:gcov}
G(x,x')=G(x-x') = e^{-(x-x')^2/2}.
\end{equation} 
In \cite{yarotsky_ei}, we have proved that the optimization by EI with this covariance function does not in general converge to the global optimum for $C^{\infty}$ functions $f$. 
We prove now that if $f$ is analytic in a sufficiently large complex neighborhood of $[a,b]$, then the optimization does converge, and moreover with an exponential convergence rate.    
\begin{theor}
Consider optimization of a (real-valued) function $f$  on the segment $[a,b]$ by EI with covariance function \eqref{eq:gcov}.
Suppose that $f$ continues analytically to a complex domain $\mathcal D\supset  [a,b]$ such that $\operatorname{dist}([a,b],\partial\mathcal D)>\rho>|b-a|$. 
Then $f_n^*$ converges to the global minimum $f^*$ of $f$ on $[a,b]$, and 
\begin{equation*}
f^*_n-f^* = O\Big(\big(\frac{b-a}{\rho}\big)^n\Big),\quad n\to\infty.
\end{equation*}
\end{theor}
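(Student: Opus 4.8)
My plan is to reduce everything to the interpolation bound of Theorem~\ref{thm:2} through the standard Gaussian-process interpretation of EI. Recall that the posterior law of $\xi_x$ given $\{\xi_{x_k}=f(x_k)\}_{k=1}^n$ is Gaussian with mean equal to the Gaussian RBF interpolant $m_n(x):=I_{\{x_k\}_{k=1}^n}^{\mathrm g}f(x)$ and with a variance $s_n^2(x)$ (the power function) depending only on the nodes. Writing $\mathfrak I_n(x)=\mathsf E\big[\max(f_n^*-\xi_x,0)\mid\text{data}\big]$, I would extract two elementary bounds directly from this representation, avoiding the explicit normal CDF/PDF formula. By Jensen's inequality for the convex map $u\mapsto\max(u,0)$,
\[
\mathfrak I_n(x)\ge\max\big(f_n^*-m_n(x),0\big)\ge f_n^*-m_n(x),
\]
and from $\max(A-B,0)\le\max(A-m,0)+|m-B|$ together with $\mathsf E\big[|\xi_x-m_n(x)|\mid\text{data}\big]=\sqrt{2/\pi}\,s_n(x)$,
\[
\mathfrak I_n(x)\le\max\big(f_n^*-m_n(x),0\big)+\sqrt{2/\pi}\,s_n(x).
\]

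Next I would set $\delta_n:=f_n^*-f^*\ge0$ and $\epsilon_n:=\sup_{x\in[a,b]}|f(x)-m_n(x)|$, so that Theorem~\ref{thm:2} applied to $I^{\mathrm g}$ gives $\epsilon_n\le c(\tfrac{b-a}{\rho})^n$. Let $x^*$ be a minimizer, $f(x^*)=f^*$. Evaluating the lower bound at $x^*$ and using $m_n(x^*)\le f^*+\epsilon_n$ yields $\mathfrak I_n(x^*)\ge\delta_n-\epsilon_n$. Since $x_{n+1}$ maximizes $\mathfrak I_n$, we have $\mathfrak I_n(x_{n+1})\ge\mathfrak I_n(x^*)\ge\delta_n-\epsilon_n$. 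I would then apply the upper bound at the freshly chosen node $x_{n+1}\in[a,b]$; using $|f(x_{n+1})-m_n(x_{n+1})|\le\epsilon_n$ and the elementary $f_n^*-f(x_{n+1})\le f_n^*-f_{n+1}^*$ gives $\mathfrak I_n(x_{n+1})\le(f_n^*-f_{n+1}^*)+\epsilon_n+\sqrt{2/\pi}\,s_n(x_{n+1})$. Combining the two displays and substituting $f_n^*-f_{n+1}^*=\delta_n-\delta_{n+1}$, the $\delta_n$ terms cancel and leave the one-step recursion
\[
\delta_{n+1}\le 2\epsilon_n+\sqrt{2/\pi}\,s_n(x_{n+1}).
\]

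The hard part will be controlling the posterior standard deviation $s_n(x_{n+1})$, since it is not \emph{a priori} an interpolation error of $f$. Here I would use the identity that the power function is itself an interpolation error of the kernel: for any $y$, interpolating the section $G(\cdot,y)$ on the nodes $x_1,\dots,x_n$ and evaluating at $y$ gives $s_n^2(y)=G(y,y)-I_{\{x_k\}_{k=1}^n}^{\mathrm g}[G(\cdot,y)](y)$, hence $s_n^2(y)\le\sup_{x\in[a,b]}\big|G(x,y)-I_{\{x_k\}_{k=1}^n}^{\mathrm g}[G(\cdot,y)](x)\big|$. The decisive point is that $G(\cdot,y)=e^{-(\,\cdot\,-y)^2/2}$ is \emph{entire}, so Theorem~\ref{thm:2} applies to it with \emph{any} radius $\rho'$, while the theorem's constant is governed by $\oint_\gamma|G(z,y)|\,|dz|$ on a fixed contour, which is bounded uniformly for $y\in[a,b]$. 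Fixing $\rho'=\rho^2/(b-a)>\rho$ then gives $s_n(y)\le c''(\tfrac{b-a}{\rho})^n$ uniformly in $y\in[a,b]$, in particular at $y=x_{n+1}$. Feeding this into the recursion yields $\delta_{n+1}=O((\tfrac{b-a}{\rho})^n)$, and since $\rho>|b-a|$ makes $\tfrac{b-a}{\rho}<1$, a harmless reindexing gives the asserted $f_n^*-f^*=O((\tfrac{b-a}{\rho})^n)$. I expect the only delicate bookkeeping to be the uniformity in $y$ of the constant coming from Theorem~\ref{thm:2}, which I would handle via the reduction \eqref{eq:errg2e} that turns $I^{\mathrm g}$ of $G(\cdot,y)$ into $I^{\mathrm e}$ of the single exponential $e^{yx}$ (up to the scalar $e^{-y^2/2}$).
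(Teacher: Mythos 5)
Your proof is correct, and while its skeleton matches the paper's (the Gaussian-process interpretation $m_n=I^{\mathrm g}_n f$, Theorem \ref{thm:2} for the mean, and a ``plug-in'' approximation of EI accurate to $O(\sigma_n)$ --- your Jensen/Lipschitz bounds are, up to the constant $\sqrt{2/\pi}$, exactly the paper's Lemma $|\mathfrak I_n(x)-(f_n^*-\min(f_n^*,m_n(x)))|\le\sigma_n(x)$), it differs in two places. Stylistically, where you combine the EI lower bound at $x^*$ with the upper bound at $x_{n+1}$ into the direct one-step recursion $\delta_{n+1}\le 2\epsilon_n+\sqrt{2/\pi}\,s_n(x_{n+1})$, the paper argues by contradiction: it supposes $f^*_{n+1}-f^*>3c\big(\frac{b-a}{\rho}\big)^n$ and shows this forces $\mathfrak I_n(x^*)>\mathfrak I_n(x_{n+1})$, contradicting the maximality defining $x_{n+1}$; your direct version is arguably cleaner. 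The substantive difference is the posterior variance: the paper does not control it within the present work at all, but imports from \cite{yarotsky_ei} (Theorem 2 there and the example below it) the statement $\max_{x\in[a,b]}\sigma_n^2(x)=O(\epsilon^n)$ for every $\epsilon>0$ and arbitrary nodes, whereas you derive the needed bound internally from Theorem \ref{thm:2} via the power-function identity $\sigma_n^2(y)=G(y,y)-I^{\mathrm g}_n[G(\cdot,y)](y)$, the entireness of the kernel sections, and the inflated radius $\rho'=\rho^2/(b-a)$, which gives $\sigma_n(y)\le C\big(\frac{b-a}{\rho}\big)^n$ --- more than enough. This makes your proof self-contained, which is a genuine gain; the price is the uniformity-in-$y$ issue you correctly flag: Theorem \ref{thm:2} as stated gives a constant depending on the interpolated function, so you must open its proof and use the explicit constant $\frac{e^{(\rho'+b-a)R}}{2\pi\rho'}\oint_\gamma|f(z)|\,|dz|$, transported through \eqref{eq:errg2e}; since $e^{z^2/2}G(z,y)=e^{yz-y^2/2}$ is uniformly bounded on a fixed contour for $y\in[a,b]$, this closes. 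In exchange, the paper's citation buys a stronger (super-exponential, for any node sequence) variance decay with no extra work, while your route buys independence from the external reference at the cost of that bookkeeping.
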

\begin{proof}
Let $m_n(x)$ and $\sigma^2_n(x)$ denote the posterior mean and variance of the process $\xi_x$ conditioned on the event $\{\xi_{x_k}=f(x_k)\}_{k=1}^{n}$:
\begin{equation*}
\xi_x|\{\xi_{x_k} = f(x_k)\}_{k=1}^n\sim\mathcal N(
m_n(x), \sigma^2_n(x)).
\end{equation*}
A straightforward computation with Gaussians shows that $m_n(x)$ is the interpolation of $f$ by the  RBF  $G$ with the nodes $x_1,\ldots,x_n$, i.e.
\begin{equation*}
m_n(x) = I^{\mathrm{g}}_nf(x).
\end{equation*}
We then obtain from Theorem \ref{thm:2} that for some constant $c$ 
\begin{equation}\label{eq:fm}
\max_{x\in[a,b]} |f(x)-m_n(x)|\le c \big(\frac{b-a}{\rho}\big)^n.
\end{equation}
On the other hand, it follows from results proved in \cite{yarotsky_ei} (see Theorem 2 and the example immediately below) that in the case of  covariance \eqref{eq:gcov} the posterior variance $\sigma_n^2(x)$ converges faster than exponentially to 0 uniformly on $[a,b]$,  for any sequence $x_1,x_2,\ldots \subset [a,b]$:
\begin{equation}\label{eq:sigma}
\max_{x\in[a,b]} \sigma_n^2(x) = O(\epsilon^n), \quad n\to\infty,
\end{equation} 
for any $\epsilon>0$.

We will prove the theorem by showing that for sufficiently large $n$
\begin{equation}\label{eq:fn}
f^*_{n+1}-f^* \le 3c \big(\frac{b-a}{\rho}\big)^n,
\end{equation}
where $c$ is from \eqref{eq:fm}.

Fix $n$. Since $f^*_{n+1}\le f^*_{n}$, it suffices to prove \eqref{eq:fn} in the case when 
\begin{equation}\label{eq:fnf}
f^*_{n}-f^*>3c \big(\frac{b-a}{\rho}\big)^n.
\end{equation}
Suppose that \eqref{eq:fn} does not hold. 
Then $f(x_{n+1})-f^* > 3c \big(\frac{b-a}{\rho}\big)^n$ and, by \eqref{eq:fm}, 
\begin{equation}\label{eq:m1}
m_n(x_{n+1})-f^* > 2c \big(\frac{b-a}{\rho}\big)^n.
\end{equation}
Consider now the minimizer $x^*\in[a,b]$ for which $f(x^*) = f^*$. 
Again using \eqref{eq:fm}, we have
\begin{equation}\label{eq:m2}
m_n(x^*)-f^* < c \big(\frac{b-a}{\rho}\big)^n.
\end{equation} 
We see by comparing \eqref{eq:m1} with \eqref{eq:m2} that the expected value of the function $f$ at $x^*$ is lower than at $x_{n+1}$. 
By exploiting the smallness of the variance, expressed by \eqref{eq:sigma}, 
we will conclude that  $x^*$ provides a better expected improvement than $x_{n+1}$, and thus will reach a contradiction with the definition of $x_{n+1}$. 
\begin{lemma}
For all $n$ and $x\in[a,b]$
\begin{equation}\label{eq:lm3}
|\mathfrak{I}_n(x)-(f_n^*-\min(f_n^*,m_n(x)))|\le \sigma_n(x).
\end{equation}
\end{lemma} 

\begin{proof} Let $\chi(s) = f_n^*-\min(f_n^*,s)$. 
Then we can write the l.h.s. of \eqref{eq:lm3} as 
\begin{equation*}
|\mathsf{E}(\chi(\xi_{x;n})-\chi(m_n(x)))|,
\end{equation*} 
where by $\xi_{x;n}$ we denote the conditioned process: $\xi_{x;n} = \xi_x|\{\xi_{x_k} = f(x_k)\}_{k=1}^n$.
But since $|\chi(s_1)-\chi(s_2)|\le |s_1-s_2|$ for all $s_1,s_2$, we have 
\begin{equation*}
|\mathsf{E}(\chi(\xi_{x;n})-\chi(m_n(x)))|\le 
\mathsf{E}|\xi_{x;n}-m_n(x)|\le
\sqrt{\mathsf{E}(\xi_{x;n}-m_n(x))^2}=
\sigma_n(x).
\end{equation*}
\end{proof}
Applying this lemma to $x=x_{n+1}$ and $x=x^*$, we get
\begin{align*}
\mathfrak{I}_n(x_{n+1})&\le f_n^*-\min(f_n^*,m_n(x_{n+1})) +\sigma_n(x_{n+1}),\\
\mathfrak{I}_n(x^*)&\ge f_n^*-\min(f_n^*,m_n(x^*)) -\sigma_n(x^*),
\end{align*} 
which implies
\begin{equation*}
\mathfrak{I}_n(x^*)-\mathfrak{I}_n(x_{n+1})\ge 
[\min(f_n^*,m_n(x_{n+1}))-\min(f_n^*,m_n(x^*))] -\sigma_n(x_{n+1})- \sigma_n(x^*).
\end{equation*}
Inequalities \eqref{eq:fnf}, \eqref{eq:m1} and \eqref{eq:m2} imply that
the expression in brackets here is greater than $c \big(\frac{b-a}{\rho}\big)^n$.
Also, thanks to \eqref{eq:sigma},  $\sigma_n(x)<\frac{c}{3} \big(\frac{b-a}{\rho}\big)^n$ 
for all all sufficiently large $n$ and all $x\in[a,b]$, in particular for $x_{n+1}$ and $x^*$. 
We conclude that
\begin{equation*}
\mathfrak{I}_n(x^*)-\mathfrak{I}_n(x_{n+1})\ge \frac{c}{3} \big(\frac{b-a}{\rho}\big)^n>0,
\end{equation*}
which completes the proof. 
\end{proof}

We end this section with a brief discussion of the obtained result. 
Note that it is of course a consequence of the strong assumption of analyticity that the convergence is both \emph{global} and \emph{exponential} (compare with the local exponential convergence of classical gradient-based numerical optimization and with the global power law convergence of EI optimization for finitely smooth functions \cite{bull}).

Note also that the strong claim of this theorem only pertains to the convergence $f^*_n\to f^*$; we have not at all claimed the convergence $x_n\to x^*$ or $f(x_n)\to f^*$.

Finally, we remark that one must be careful with practical implementations of the EI algorithm when used with covariance function \eqref{eq:gcov} and applied to analytic objective functions, as the algorithm involves ill-conditioned interpolation matrices and other elements potentially sensitive to round-off errors and/or requiring high-precision computations (see \cite{yarotsky_ei}).

\section*{Acknowledgement}
\addcontentsline{toc}{section}{Acknowledgement}
The author thanks the anonymous referees for several valuable suggestions and corrections.

\end{document}